\numberwithin{equation}{section}
\numberwithin{figure}{section}
  \theoremstyle{plain}
  \newtheorem*{conjecture*}{Conjecture}
\theoremstyle{plain}
\newtheorem{thm}{Theorem}
  \theoremstyle{definition}
  \newtheorem{defn}[thm]{Definition}
  \theoremstyle{plain}
  \newtheorem{prop}[thm]{Proposition}
\theoremstyle{definition}
\numberwithin{Theorem}{section} \numberwithin{equation}{section}
\begin{document}

\newcommand{\Gal}{{\rm Gal}} 
\newcommand{\cha}{{\rm char. }} 
\newcommand{\DIM}{{\rm dim}}

\newtheorem{Assumption}[thm]{Assumption}

\title[On the Tate conjecture for Fano surfaces]{On the Tate conjecture for the Fano surfaces of cubic threefolds}

\addtolength{\textwidth}{20mm}
\addtolength{\hoffset}{-0mm} 
\addtolength{\textheight}{20mm}
\addtolength{\voffset}{-0mm}

\author{Xavier Roulleau}
\begin{abstract}
A Fano surface of a smooth cubic threefold $X\hookrightarrow\mathbb{P}^{4}$
parametrizes the lines on $X$. In this note, we prove that a Fano
surface satisfies the Tate conjecture over a field of finite type
over the prime field and characteristic not $2$.
\end{abstract}
\maketitle

\section{Introduction.}

Let $k$ be a field of finite type over the prime field and let $\ell$
be a prime integer, prime to the characteristic. We denote by $\bar{k}$
an algebraic closure of $k$ and by $G$ the Galois group $\Gal(\bar{k}/k)$.
Let $X$ be a geometrically connected smooth projective variety over
$k$ and $\bar{X}:=X\times_{k}\bar{k}$. We denote by $A^{1}(X)$
the $\mathbb{Q}_{\ell}$-span of the images of the divisor classes
defined over $k$ in the (twisted) étale cohomology group $H^{2}(\bar{X},\mathbb{Q}_{\ell}(1))$.
The group $G$ acts on $H^{2}(\bar{X},\mathbb{Q}_{\ell}(1))$ and
fixes the subspace $A^{1}(X)$. The Tate conjecture for divisors is: 
\begin{conjecture*}
(Tate Conjecture \cite{Tate1}). We have $A^{1}(X)=H^{2}(\bar{X},\mathbb{Q}_{\ell}(1))^{G}$. 
\end{conjecture*}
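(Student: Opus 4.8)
The plan is to reduce the Tate conjecture for the Fano surface $S:=F(X)$ to the (known) Tate conjecture for divisors on an abelian variety, namely its Albanese variety $A:=\mathrm{Alb}(S)$, which over $\bar k$ is identified with the intermediate Jacobian of $X$ and is a principally polarized abelian variety of dimension $5$. First I would record the two structural facts that drive everything: the Albanese morphism $\alpha\colon S\to A$ is defined over $k$, it induces a $G$-equivariant isomorphism $\alpha^{*}\colon H^{1}(\bar A,\mathbb{Q}_{\ell})\xrightarrow{\ \sim\ }H^{1}(\bar S,\mathbb{Q}_{\ell})$, and the Betti numbers of $S$ satisfy $b_{1}(S)=10$ and $b_{2}(S)=45=\binom{10}{2}$ (since translations act trivially on $\ell$-adic cohomology, the map $\alpha^{*}$ is insensitive to the choice of base point, hence genuinely $G$-equivariant).

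The key geometric input is that for a Fano surface the cup-product map
\[
\wedge^{2}H^{1}(\bar S,\mathbb{Q}_{\ell})\longrightarrow H^{2}(\bar S,\mathbb{Q}_{\ell})
\]
is an isomorphism of $G$-modules, consistent with the numerics above. Granting this, and using that $H^{2}(\bar A,\mathbb{Q}_{\ell})=\wedge^{2}H^{1}(\bar A,\mathbb{Q}_{\ell})$ for the abelian variety $A$, the compatibility of $\alpha^{*}$ with cup products (it is a homomorphism of cohomology rings) shows that the square relating the two cup products commutes, so that
\[
\alpha^{*}\colon H^{2}(\bar A,\mathbb{Q}_{\ell}(1))\xrightarrow{\ \sim\ }H^{2}(\bar S,\mathbb{Q}_{\ell}(1))
\]
is itself a $G$-equivariant isomorphism. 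Since $\alpha^{*}$ is moreover compatible with cycle class maps, it sends the class of a divisor on $\bar A$ to the class of its pullback; in particular $\alpha^{*}\!\big(A^{1}(A)\big)\subseteq A^{1}(S)$.

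I would then conclude by invoking the Tate conjecture for divisors on abelian varieties over a field of finite type (Tate over finite fields; Faltings, Zarhin, Mori in general, for $\ell$ prime to the characteristic), which gives $A^{1}(A)=H^{2}(\bar A,\mathbb{Q}_{\ell}(1))^{G}$. Feeding this through the isomorphism $\alpha^{*}$ produces the chain
\[
H^{2}(\bar S,\mathbb{Q}_{\ell}(1))^{G}=\alpha^{*}\!\big(H^{2}(\bar A,\mathbb{Q}_{\ell}(1))^{G}\big)=\alpha^{*}\!\big(A^{1}(A)\big)\subseteq A^{1}(S)\subseteq H^{2}(\bar S,\mathbb{Q}_{\ell}(1))^{G},
\]
which forces equality throughout and is precisely the assertion of the Tate conjecture for $S$.

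The hard part will be the geometric input of the second paragraph: proving, in \'etale cohomology and in positive characteristic, that cup product yields a $G$-equivariant isomorphism $\wedge^{2}H^{1}(\bar S)\cong H^{2}(\bar S)$ compatible with cycle classes. Over $\mathbb{C}$ this is classical, so I would expect to argue either by lifting to characteristic $0$ and applying smooth proper base change, or—more robustly in mixed characteristic—by exhibiting an explicit algebraic correspondence (for instance the incidence correspondence attached to the lines on $X$) that realizes $H^{2}(\bar S)$ as the summand of $H^{2}(\bar A)$ cut out by algebraic cycles. This is presumably also where the hypothesis $\mathrm{char}\,k\neq 2$ is required, to ensure the expected smoothness and cohomological behaviour of the Fano scheme of lines; once the identification is secured, the remainder is formal together with the cited Tate theorem for abelian varieties.
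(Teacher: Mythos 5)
Your overall architecture coincides with the paper's: reduce the Tate conjecture for the Fano surface $S$ to the known Tate conjecture for divisors on its Albanese variety $A$ by showing that the pullback along the Albanese map is a $G$-equivariant isomorphism $H^{2}(\bar{A},\mathbb{Q}_{\ell}(1))\to H^{2}(\bar{S},\mathbb{Q}_{\ell}(1))$, and then run a formal sandwich argument; your closing chain of inclusions is exactly the dimension count in the paper's Proposition \ref{pro:Iso de galois modules} ($\dim H^{2}(\bar{S},\mathbb{Q}_{\ell}(1))^{G}\geq\rho_{S}\geq\rho_{A}=\dim H^{2}(\bar{A},\mathbb{Q}_{\ell}(1))^{G}$). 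The genuine gap is precisely the step you yourself flag as ``the hard part'': you assert, but do not prove, that cup product gives a $G$-isomorphism $\wedge^{2}H^{1}(\bar{S},\mathbb{Q}_{\ell})\to H^{2}(\bar{S},\mathbb{Q}_{\ell})$ (equivalently, that $\vartheta^{*}$ is injective on $H^{2}$), and neither of your two suggested strategies is carried out. The paper closes this gap by a direct, characteristic-free argument requiring no lifting: by Beauville, $\vartheta$ is an embedding and the class of $\bar{S}$ in $\bar{A}$ is $W=\frac{1}{3!}\Theta^{3}$ for the principal polarization $\Theta$; the projection formula yields $\eta_{\bar{S}}(\vartheta^{*}x\cdot\vartheta^{*}y)=(\deg\vartheta)\,\eta_{\bar{A}}(x\cdot W\cdot y)$; and Deligne's Hard Lefschetz theorem (valid in $\ell$-adic cohomology in positive characteristic) together with Poincar\'e duality shows the pairing $(x,y)\mapsto\eta_{\bar{A}}(x\cdot W\cdot y)$ is non-degenerate, so $\vartheta^{*}$ is injective on $H^{2}$; the equality $b_{2}(\bar{S})=b_{2}(\bar{A})=45$ (Bombieri--Swinnerton-Dyer) then upgrades injectivity to an isomorphism. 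Your lifting-to-characteristic-$0$ route could likely be made to work, since the cup-product statement is geometric and compatible with smooth proper base change, but note that the ``classical'' complex-analytic fact is itself proved by essentially this Hard Lefschetz mechanism, so the detour through a lift buys nothing and adds the burden of constructing and controlling the lift; Deligne's theorem makes the argument uniform in all characteristics.

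Two secondary points. First, your guess about where $\mathrm{char}\,k\neq2$ enters is off: in the paper it is consumed through Murre's Proposition (1.25), which supplies (after a finite extension of $k$ --- harmless, since by Tate the conjecture is invariant under finite extensions) a $k$-rational line $L_{0}$ such that every plane through $L_{0}$ cuts out three distinct lines on $\bar{X}$; this gives the base point $s_{0}$ making $\vartheta$ defined over $k$ and ensures that Beauville's results ($(A,\Theta)$ principally polarized over $k$, $[\bar{S}]=\frac{1}{3!}\Theta^{3}$), proved over an algebraically closed field, descend to $k$. Second, your parenthetical observation that translations act trivially on cohomology is the right germ of an alternative (Albanese-torsor) fix for the rational-point issue, but as stated it does not substitute for the descent of the polarization data that the paper's key non-degeneracy lemma actually uses; if you complete your argument along the paper's lines rather than by lifting, you will need the $L_{0}$ maneuver (or an equivalent) in full.
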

The Tate conjecture for Abelian varieties has been proved by Tate
\cite{Tate4} for finite fields, by Zarhin \cite{Zarhin} in characteristic
$>2$, by Mori \cite{Mori} in characteristic $2$, and by Faltings
\cite{Faltings} for fields of characteristic 0. We know a few cases
of surfaces that satisfy the Tate conjecture (K3 surfaces, product
of two curves, some Picard modular surfaces...)

In this note, we prove the Tate conjecture for another family of surfaces.
Let us suppose that the field $k$ has moreover characteristic $\not=2$
and let $X\hookrightarrow\mathbb{P}_{/k}^{4}$ be a smooth cubic hypersurface.
The variety that parametrizes the lines on $X$ is a smooth projective
surface defined over $k$ called the \emph{Fano surface of lines}
of $X$. This surface $S$ is minimal of general type and has invariants:\[
c_{1}^{2}=45,\, c_{2}=27,\, b_{1}=10,\, b_{2}=45.\]
We obtain the following result:
\begin{thm}
\label{thm:principal result The-Tate-conjecture} The Tate conjecture
holds for the surface $S$. 
\end{thm}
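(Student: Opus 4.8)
The plan is to reduce the Tate conjecture for the Fano surface $S$ to the already-known Tate conjecture for abelian varieties, by exploiting the geometry relating $S$ to the intermediate Jacobian of the cubic threefold $X$.

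<br>

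Let me think about what's known about Fano surfaces. The key classical result (over $\mathbb{C}$, due to Clemens-Griffiths, Tyurin, etc.) is that the Albanese variety of the Fano surface $S$ is the intermediate Jacobian $J(X)$ of the cubic threefold, and the Albanese map $S \to \mathrm{Alb}(S) = J(X)$ is a closed immersion. Moreover there's an isomorphism of Hodge structures between $H^2$ of $S$ and $\wedge^2 H^1(S)$, i.e., the cohomology of $S$ is "generated" by $H^1$.

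<br>

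So the structure of the argument should be: First, recall/establish that $H^2(\bar{S}, \mathbb{Q}_\ell(1)) \cong \wedge^2 H^1(\bar{S}, \mathbb{Q}_\ell)(1)$ as Galois modules (this $G$-equivariant isomorphism should follow from the geometry, specifically from the incidence correspondence and the fact that $H^*(S)$ is generated by $H^1$). Second, since $\mathrm{Alb}(S) = A$ is an abelian variety with $H^1(\bar{S}, \mathbb{Q}_\ell) = H^1(\bar{A}, \mathbb{Q}_\ell)$, the Tate conjecture for divisors on $S$ becomes a statement about $G$-invariants in $\wedge^2 H^1(\bar{A}, \mathbb{Q}_\ell)(1)$. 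Third, invoke the Tate conjecture for the abelian variety $A$ (Tate, Zarhin, Faltings, depending on characteristic — here we need char $\neq 2$, which matches the hypothesis) to control these invariants. The point is that endomorphisms of $A$ (Tate classes on $A \times A$) produce, via the $\wedge^2$ identification, precisely the divisor classes on $S$, so $\dim A^1(S) \geq \dim (H^2)^G$; combined with the trivial reverse inequality this gives equality.

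<br>

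\begin{proof}[Proof sketch]
The strategy is to transfer the problem to the Albanese variety. One first shows, from the geometry of the Fano surface, that the cup-product induces a $G$-equivariant isomorphism $\wedge^2 H^1(\bar{S},\mathbb{Q}_\ell) \xrightarrow{\sim} H^2(\bar{S},\mathbb{Q}_\ell)$; equivalently the cohomology ring of $\bar{S}$ is generated in degree one. Writing $A = \mathrm{Alb}(S)$ for the Albanese variety, the Albanese map identifies $H^1(\bar{S},\mathbb{Q}_\ell) \cong H^1(\bar{A},\mathbb{Q}_\ell)$ as $G$-modules, whence
\[
H^2(\bar{S},\mathbb{Q}_\ell(1))^G \;\cong\; \bigl(\wedge^2 H^1(\bar{A},\mathbb{Q}_\ell)(1)\bigr)^G \;\cong\; \mathrm{End}_G\bigl(H^1(\bar{A},\mathbb{Q}_\ell)\bigr)^{\dagger},
\]
where the last identification (using a polarization to turn $\wedge^2$ into the relevant space of symmetric endomorphisms) reduces the computation of Tate classes on $S$ to the Tate classes of the abelian variety $A \times A$. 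The Tate conjecture for abelian varieties over fields of finite type in characteristic $\neq 2$ (Tate, Zarhin, Faltings) then asserts that every such Galois-invariant class is algebraic on $A\times A$, coming from an endomorphism or a divisor. Finally one checks that these algebraic classes descend to genuine divisor classes on $S$ under the $\wedge^2$ identification, giving $A^1(S) = H^2(\bar{S},\mathbb{Q}_\ell(1))^G$.
\end{proof}

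<br>

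The main obstacle I anticipate is twofold. First, establishing the $G$-equivariant isomorphism $\wedge^2 H^1 \cong H^2$ in positive characteristic: over $\mathbb{C}$ this is classical Hodge theory on the Fano surface, but in the étale/arithmetic setting one must produce it from algebraic correspondences or from the known structure of $S$ inside $A$ in a way that is compatible with the Galois action, and verify it survives reduction and holds for the $\ell$-adic realization. Second, and more delicate, is the descent/comparison step: knowing that a class is a Tate class on the abelian variety $A$ only tells us it is algebraic on $A$ (a divisor or, via $\mathrm{End}$, a correspondence), and one must trace such a class back through the Albanese embedding to confirm it is represented by an actual divisor on $S$ defined over $k$ — i.e. that the algebraicity on $A$ genuinely yields algebraicity on $S$, not merely on $A\times A$. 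Handling the characteristic $2$ exclusion and the precise input from each of Tate/Zarhin/Faltings according to the characteristic is a routine but necessary bookkeeping matter.
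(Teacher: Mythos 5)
Your high-level strategy coincides with the paper's (transfer the problem to the Albanese variety $A$ and invoke the known Tate conjecture for abelian varieties), but your proposal has a genuine gap exactly at the step you yourself flag as ``the main obstacle'': you never establish the $G$-equivariant isomorphism $\wedge^{2}H^{1}(\bar{S},\mathbb{Q}_{\ell})\simeq H^{2}(\bar{S},\mathbb{Q}_{\ell})$, i.e.\ that $\vartheta^{*}:H^{2}(\bar{A},\mathbb{Q}_{\ell})\to H^{2}(\bar{S},\mathbb{Q}_{\ell})$ is an isomorphism, and this is the entire technical content of the theorem --- everything after it is formal. The paper fills the gap with a concrete two-step argument that your sketch has no mechanism to reproduce. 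Injectivity: the Albanese map embeds $S$ in $A$ with class $[\bar{S}]=\frac{1}{3!}\Theta^{3}$ for a principal polarization $\Theta$ (Beauville), and the projection formula gives $\eta_{\bar{S}}(\vartheta^{*}x\cdot\vartheta^{*}y)=(\deg\vartheta)\,\eta_{\bar{A}}(x\cdot[\bar{S}]\cdot y)$; the pairing $(x,y)\mapsto\eta_{\bar{A}}(x\cdot\Theta^{3}\cdot y)$ on $H^{2}(\bar{A},\mathbb{Q}_{\ell}(1))$ is non-degenerate by Deligne's Hard Lefschetz theorem applied to $\Theta^{3}$ together with Poincar\'e duality --- precisely the \'etale-cohomological substitute for the Hodge theory you worry about losing in positive characteristic. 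Surjectivity: the numerical coincidence $b_{2}(\bar{S})=b_{2}(\bar{A})=45$, a special fact about Fano surfaces due to Bombieri and Swinnerton-Dyer, not a formal consequence of anything in your outline. Without these two inputs (the cycle class $\Theta^{3}/3!$ and the Betti number count) the isomorphism you assert in your first sentence remains unproved.

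Your second anticipated difficulty --- descending algebraicity from Tate classes on $A\times A$ back to honest divisor classes on $S$ --- is a complication of your own making, and the paper's formulation dissolves it. Cite the Tate conjecture for \emph{divisors} on abelian varieties directly (a standard consequence of the Tate/Zarhin/Mori/Faltings endomorphism theorems): every $G$-invariant class in $H^{2}(\bar{A},\mathbb{Q}_{\ell}(1))$ lies in the span of divisor classes on $A$. Divisors on $A$ pull back to divisors on $S$, so injectivity of $\vartheta^{*}$ already gives $\rho_{S}\geq\rho_{A}$, and the chain
\[
\dim H^{2}(\bar{S},\mathbb{Q}_{\ell}(1))^{G}\;\geq\;\rho_{S}\;\geq\;\rho_{A}\;=\;\dim H^{2}(\bar{A},\mathbb{Q}_{\ell}(1))^{G}\;=\;\dim H^{2}(\bar{S},\mathbb{Q}_{\ell}(1))^{G}
\]
(the last equality because $\vartheta^{*}$ is an isomorphism of Galois modules) forces equality throughout; no passage through $\mathrm{End}_{G}(H^{1})$ or a polarization-dependent identification of $\wedge^{2}$ with symmetric endomorphisms is needed. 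Finally, you omit the arithmetic bookkeeping the paper performs: one passes to a finite extension of $k$ over which $X$ contains a suitable line $L_{0}$ (Murre), legitimate because the Tate conjecture is insensitive to finite extensions (Tate), and this guarantees that the Albanese map, $\Theta$, and the principally polarized structure are all defined over the base field.
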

For the proof we use the fact that the Fano surface is contained in
its $5$ dimensional Albanese variety $A$ and has class $\frac{1}{3!}\Theta^{3}$
where $\Theta$ is a principal polarization. Using then the Hard Lefschetz
Theorem, the Poincaré Duality, and the equality $b_{2}(\bar{S})=b_{2}(\bar{A})$
of Betti numbers, we obtain that the natural map\[
H^{2}(\bar{A},\mathbb{Q}_{\ell}(1))\to H^{2}(\bar{S},\mathbb{Q}_{\ell}(1))\]
is an isomorphism. Therefore the second étale cohomology group of
$S$ is essentially the same as the second étale cohomology group
of the Abelian variety $A$, for which the Tate conjecture is known.

To the knowledge of the author, Abelian surfaces and Fano surfaces
are the only known surfaces $S$ such that there is an isomorphism
between the second étale cohomology groups of $S$ and its Albanese
variety.

\textbf{Acknowledgements.} The author gratefully thanks Marc-Hubert
Nicole for his encouragements to work on this question. He thanks
also the Editor and the Referee for their numerous suggestions to
improve the exposition of this paper, and Stavros Papadakis, Alessandra
Sarti, Matthias Schütt for their comments.\\
 The author has been supported under FCT grant SFRH/BPD/72719/2010
and project Géometria Algébrica PTDC/MAT/099275/2008.

\section{The proof}

Let $k$ be a field finitely generated over its prime field, and let
$\bar{k}$ be the algebraic closure of $k$. Recall \cite[Theorem 11.1]{MilneEtale}
that for a smooth $n$-dimensional projective variety $Z$ over $\bar{k}$,
there exists a canonical isomorphism $\eta_{Z}:H^{2n}(Z,\mathbb{Q}_{\ell}(n))\to\mathbb{Q}_{\ell}$
sending the class of a closed point to $1$. Let $A$ be an Abelian
variety of dimension $n\geq2$ defined over $k$. 
\begin{defn}
We say that a $2$-dimensional cycle $W$ on $A$ is \emph{non-degenerate}
if the $\mathbb{Q}_{\ell}$-bilinear form: \[
\begin{array}{c}
Q_{W}:\\
\\\end{array}\begin{array}{ccc}
H^{2}(\bar{A},\mathbb{Q}_{\ell}(1))\times H^{2}(\bar{A},\mathbb{Q}_{\ell}(1)) & \to & \mathbb{Q}_{\ell}\\
(x,y) & \to & \eta_{\bar{A}}(x\cdot W\cdot y)\end{array}\]
 is non-degenerate, where we consider the cycle $W$ in $H^{2n-4}(\bar{A},\mathbb{Q}_{\ell}(n-2))$
and $\cdot$ denotes the cup product. 
\end{defn}
An example of a non-degenerate cycle is:
\begin{prop}
Let $\Theta$ be an ample divisor on $A$. The cycle $\frac{1}{(n-2)!}\Theta^{n-2}$
is non-degenerate.\end{prop}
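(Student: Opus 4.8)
The plan is to reduce the claim to the classical Hard Lefschetz theorem for the abelian variety $\bar A$, applied to cup product with the ample class $[\Theta]$. First I would recall that, writing $L$ for the operator on $H^*(\bar A,\QQ_\ell)$ given by cup product with the divisor class $[\Theta]\in H^2(\bar A,\QQ_\ell(1))$, the Hard Lefschetz theorem asserts that for each $i$ the iterated map
\[
L^{n-i}:H^{i}(\bar A,\QQ_\ell(\ast))\to H^{2n-i}(\bar A,\QQ_\ell(\ast))
\]
is an isomorphism. Taking $i=2$ gives that $L^{n-2}\colon H^2\to H^{2n-2}$ is an isomorphism. The point is that $L^{n-2}(x)=x\cdot\Theta^{n-2}$, up to the factorials, so cupping with the cycle $\tfrac{1}{(n-2)!}\Theta^{n-2}$ realizes precisely this Hard Lefschetz isomorphism.

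The key steps are then as follows. Given $x\in H^2(\bar A,\QQ_\ell(1))$ with $x\neq 0$, I want to produce $y\in H^2$ with $Q_W(x,y)\neq 0$, where $W=\tfrac{1}{(n-2)!}\Theta^{n-2}$. By Hard Lefschetz the element $x\cdot W=\tfrac{1}{(n-2)!}L^{n-2}(x)$ is a \emph{nonzero} class in $H^{2n-2}(\bar A,\QQ_\ell(n-1))$. Now I would invoke Poincar\'e duality: the cup-product pairing
\[
H^{2n-2}(\bar A,\QQ_\ell(n-1))\times H^{2}(\bar A,\QQ_\ell(1))\to H^{2n}(\bar A,\QQ_\ell(n))\xrightarrow{\ \eta_{\bar A}\ }\QQ_\ell
\]
is a perfect pairing. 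Since $x\cdot W$ is nonzero, perfectness furnishes some $y\in H^2$ with $\eta_{\bar A}\bigl((x\cdot W)\cdot y\bigr)\neq 0$, i.e. $Q_W(x,y)\neq 0$. This shows the left kernel of $Q_W$ is trivial; since $Q_W$ is symmetric (cup product on even-degree classes commutes and $W$ has even degree), the form is non-degenerate.

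The two ingredients I would want to state carefully are Hard Lefschetz in $\ell$-adic cohomology (available here because $\bar A$ is a smooth projective variety over $\bar k$, by Deligne's theorem) and the compatibility $L^{n-2}(x)=x\cup[\Theta]^{n-2}$ together with the fact that the twist $\QQ_\ell(1)$ is harmless — the Tate twists simply bookkeep so that $Q_W$ lands in $\QQ_\ell$ rather than $\QQ_\ell(\text{some twist})$. The main obstacle is essentially notational rather than conceptual: one must check that cupping with the \emph{cycle class} of $\Theta^{n-2}$ (viewed in $H^{2n-4}(\bar A,\QQ_\ell(n-2))$ via the cycle class map) agrees with the $(n-2)$-fold iterate of the Lefschetz operator $L$ defined by the cohomology class $[\Theta]$. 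This is standard, but it is the one place where one should confirm that the cycle-class map is a ring homomorphism so that $[\Theta^{n-2}]=[\Theta]^{n-2}=L^{n-2}(1)$; granting that, the proposition is immediate from Hard Lefschetz and Poincar\'e duality.
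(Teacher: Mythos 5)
Your proof is correct and follows essentially the same route as the paper: Deligne's Hard Lefschetz theorem to see that cup product with $\Theta^{n-2}$ gives an isomorphism $H^{2}(\bar{A},\mathbb{Q}_{\ell}(1))\to H^{2n-2}(\bar{A},\mathbb{Q}_{\ell}(n-1))$, combined with the perfectness of the Poincar\'e duality pairing to conclude non-degeneracy of $Q_W$. Your additional care about identifying the cycle class of $\Theta^{n-2}$ with the iterate $L^{n-2}$ and about the harmlessness of the Tate twists is a reasonable elaboration of points the paper leaves implicit, but it is not a different argument.
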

\begin{proof}
By the Hard Lefschetz Theorem of Deligne \cite[Théorème 4.1.1]{Deligne},
the cup product induced by $\Theta^{n-2}$ induces an isomorphism
between $H^{2}(\bar{A},\mathbb{Q}_{\ell}(1))$ and $H^{2n-2}(\bar{A},\mathbb{Q}_{\ell}(n-1))$.
Moreover, by the Poincaré Duality \cite[Chap. VI]{Deligne2}, the
cup-product pairing\[
H^{2}(\bar{A},\mathbb{Q}_{\ell}(1))\times H^{2n-2}(\bar{A},\mathbb{Q}_{\ell}(n-1))\to H^{2n}(\bar{A},\mathbb{Q}_{\ell}(n))\simeq\mathbb{Q}_{\ell}\]
 is perfect. Combining these two assumptions and the fact that cohomology
class $\Theta^{n-2}$ is divisible by $(n-2)!$, we get that the cycle
$\frac{1}{(n-2)!}\Theta^{n-2}$ is non-degenerate.
\end{proof}
Let $S$ be a smooth surface over $k$ with a $k$-rational point
$s_{0}$. Let $A$ be the Albanese variety of $S$ and let $\vartheta:S\to A$
be the Albanese map such that $\vartheta(s_{0})=0$. 
\begin{prop}
\label{pro:Iso de galois modules}Suppose that the image $W$ of $\bar{S}$
is a non-degenerate cycle in $\bar{A}$ and $b_{2}(\bar{S})=b_{2}(\bar{A})$.
The map \[
\vartheta^{*}:H^{2}(\bar{A},\mathbb{Q}_{\ell})\to H^{2}(\bar{S},\mathbb{Q}_{\ell})\]
is an isomorphism of Galois modules. The surface $S$ satisfies the
Tate conjecture and $\rho_{S}=\rho_{A}$, where $\rho_{Z}=\dim_{\mathbb{Q}_{\ell}}A_{1}(Z)$
for a geometrically smooth irreducible variety $Z_{/k}$. \end{prop}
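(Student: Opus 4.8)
The plan is to show that $\vartheta^{*}$ is injective; since the hypothesis $b_{2}(\bar{S})=b_{2}(\bar{A})$ says that the source and target of $\vartheta^{*}$ have the same finite $\mathbb{Q}_{\ell}$-dimension, injectivity will automatically upgrade $\vartheta^{*}$ to an isomorphism. Galois equivariance is free: $\vartheta$ is defined over $k$ (it is the Albanese map normalized at the $k$-rational point $s_{0}$), so the pullback on étale cohomology commutes with the $G$-action and with the Tate twist. Everything therefore reduces to proving injectivity of $\vartheta^{*}$ on $H^{2}(\bar{A},\mathbb{Q}_{\ell}(1))$, and the tool to exploit is the assumed non-degeneracy of $Q_{W}$.

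The heart of the argument is to recognize $Q_{W}$, transported through $\vartheta^{*}$, as the intersection pairing on $S$. Writing $W=\vartheta_{*}[\bar{S}]$ and applying the projection formula to the proper map $\vartheta\colon\bar{S}\to\bar{A}$, for $x,y\in H^{2}(\bar{A},\mathbb{Q}_{\ell}(1))$ one gets
\[
x\cdot y\cdot W=x\cdot y\cdot\vartheta_{*}[\bar{S}]=\vartheta_{*}\big(\vartheta^{*}x\cdot\vartheta^{*}y\big)
\]
in $H^{2n}(\bar{A},\mathbb{Q}_{\ell}(n))$. Combining this with the compatibility of the orientation maps, $\eta_{\bar{A}}\circ\vartheta_{*}=\eta_{\bar{S}}$ on top-degree cohomology (a closed point of $\bar{S}$ maps to a closed point of $\bar{A}$), I obtain
\[
Q_{W}(x,y)=\eta_{\bar{A}}(x\cdot y\cdot W)=\eta_{\bar{S}}\big(\vartheta^{*}x\cdot\vartheta^{*}y\big),
\]
i.e. $Q_{W}(x,y)$ is exactly the Poincaré (cup-product) pairing of $\vartheta^{*}x$ and $\vartheta^{*}y$ on $\bar{S}$. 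Consequently, if $\vartheta^{*}x=0$ then $Q_{W}(x,y)=0$ for every $y$, and the non-degeneracy of $Q_{W}$ forces $x=0$; hence $\vartheta^{*}$ is injective, and by the dimension count it is an isomorphism of Galois modules.

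Granting the isomorphism, the Tate conjecture for $S$ follows by transport of structure. Taking $G$-invariants in the Galois-module isomorphism $\vartheta^{*}\colon H^{2}(\bar{A},\mathbb{Q}_{\ell}(1))\xrightarrow{\sim}H^{2}(\bar{S},\mathbb{Q}_{\ell}(1))$ gives an isomorphism on the fixed subspaces. Since $\vartheta$ is defined over $k$, the pullback of a divisor class defined over $k$ is again one, so $\vartheta^{*}\big(A^{1}(A)\big)\subseteq A^{1}(S)$, while the elementary inclusion $A^{1}(S)\subseteq H^{2}(\bar{S},\mathbb{Q}_{\ell}(1))^{G}$ always holds. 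Now, given $\xi\in H^{2}(\bar{S},\mathbb{Q}_{\ell}(1))^{G}$, write $\xi=\vartheta^{*}\zeta$ with $\zeta\in H^{2}(\bar{A},\mathbb{Q}_{\ell}(1))^{G}$; the Tate conjecture for the Abelian variety $A$ (known by the results quoted in the introduction) gives $\zeta\in A^{1}(A)$, whence $\xi\in A^{1}(S)$. This proves $A^{1}(S)=H^{2}(\bar{S},\mathbb{Q}_{\ell}(1))^{G}$, the Tate conjecture for $S$. Combining the displayed equalities shows moreover $A^{1}(S)=\vartheta^{*}\big(A^{1}(A)\big)$, and since $\vartheta^{*}$ is injective this gives $\rho_{S}=\rho_{A}$.

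The main obstacle is the computation of the second paragraph: one must correctly track the degrees and Tate twists through the Gysin pushforward $\vartheta_{*}\colon H^{4}(\bar{S},\mathbb{Q}_{\ell}(2))\to H^{2n}(\bar{A},\mathbb{Q}_{\ell}(n))$ and invoke the projection formula together with the trace compatibility $\eta_{\bar{A}}\circ\vartheta_{*}=\eta_{\bar{S}}$. Everything after that is either formal (an injection between spaces of equal finite dimension is an isomorphism) or a routine transport of the Tate property along the resulting Galois isomorphism.
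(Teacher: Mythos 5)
Your proof is correct and follows essentially the same route as the paper: the projection formula plus the trace compatibility $\eta_{\bar{A}}\circ\vartheta_{*}=\eta_{\bar{S}}$ identify $Q_{W}$ with the cup-product pairing of pullbacks, non-degeneracy forces injectivity of $\vartheta^{*}$, the equality $b_{2}(\bar{S})=b_{2}(\bar{A})$ upgrades it to a Galois-module isomorphism, and the Tate property is transported from $A$ (your direct ``lift an invariant class and apply Tate for $A$'' argument is just a repackaging of the paper's chain of inequalities $\dim H^{2}(\bar{S},\mathbb{Q}_{\ell}(1))^{G}\geq\rho_{S}\geq\rho_{A}=\dim H^{2}(\bar{A},\mathbb{Q}_{\ell}(1))^{G}$). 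One small correction: in the generality of the proposition $\vartheta$ is only generically finite onto its image, so $\vartheta_{*}[\bar{S}]=(\deg\vartheta)\,W$ rather than $W$; your key identity should read $\eta_{\bar{S}}(\vartheta^{*}x\cdot\vartheta^{*}y)=(\deg\vartheta)\,Q_{W}(x,y)$, which, since $\deg\vartheta\neq0$, leaves the injectivity argument intact --- this is exactly how the paper phrases it.
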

\begin{proof}
Let $f:Y\to X$ be a proper map of smooth complete separated varieties
over an algebraically closed field. Let be $a=\dim(X)$, $d=\dim(Y)$
and $c=d-a$. By \cite[Remark 11.6 (d)]{MilneEtale}, there is a linear
map \[
f_{*}:H^{r}(Y,\mathbb{Z}_{\ell})\to H^{r-2c}(X,\mathbb{Z}_{\ell})\]
satisfying the projection formula:\[
f_{*}(y\cdot f^{*}x)=f_{*}y\cdot x,\,\, x\in H^{r}(X,\mathbb{Z}_{\ell}(d)),\, y\in H^{s}(Y,\mathbb{Z}_{\ell}).\]
Let $W$ be the image of $S$ in its Albanese variety $A$. Using
the projection formula, we have \[
\eta_{\bar{A}}(x\cdot y\cdot\vartheta_{*}S)=\eta_{\bar{S}}(\vartheta^{*}x\cdot\vartheta^{*}y\cdot S)=\eta_{\bar{S}}(\vartheta^{*}x\cdot\vartheta^{*}y)\]
for $x,y\in H^{2}(\bar{A},\mathbb{Q}_{\ell}(1))$, and we obtain the
following equality: \[
\eta_{\bar{S}}(\vartheta^{*}x\cdot\vartheta^{*}y)=(\deg\vartheta)\eta_{\bar{A}}(x.W.y),\]
where $\deg\vartheta\not=0$ is the degree of $\vartheta$ onto its
image $W$. Since $Q_{W}(x,y)=\eta_{\bar{A}}(x.W.y)$ and $Q_{W}$
is a non-degenerate pairing, the map \[
\vartheta^{*}:H^{2}(\bar{A},\mathbb{Q}_{\ell}(1))\to H^{2}(\bar{S},\mathbb{Q}_{\ell}(1))\]
is injective. As $b_{2}(\bar{S})=b_{2}(\bar{A})$, the map $\vartheta^{*}$
is then an isomorphism of Galois modules and $H^{2}(\bar{S},\mathbb{Q}_{\ell}(1))\simeq H^{2}(\bar{A},\mathbb{Q}_{\ell}(1))$.
Since the Tate conjecture is satisfied for divisors on Abelian varieties
over the field $k$ of finite type over the prime field, we have $\rho_{A}=\dim H^{2}(\bar{A},\mathbb{Q}_{\ell}(1))^{G}$.
Since $\vartheta^{*}$ is injective, we have $\rho_{S}\geq\rho_{A}$.
On the other hand, for every variety $X$ over $k$, we have $\dim H^{2}(\bar{X},\mathbb{Q}_{\ell}(1))^{G}\geq\rho_{X}$
therefore: \[
\dim H^{2}(\bar{S},\mathbb{Q}_{\ell}(1))^{G}\geq\rho_{S}\geq\rho_{A}=\dim H^{2}(\bar{A},\mathbb{Q}_{\ell}(1))^{G},\]
we thus obtain $\rho_{S}=\dim H^{2}(\bar{S},\mathbb{Q}_{\ell}(1))^{G}=\rho_{A}$
and the Tate conjecture holds for $S$.
\end{proof}
Let us suppose that the field $k$ has characteristic not $2$. Let
$X$ be a smooth cubic hypersurface defined over the field $k$ and
let $S$ be its Fano surface. The surface $S$ is a smooth geometrically
connected variety defined over $k$ \cite[Theorem 1.16 i and (1.12)]{Altman}. 

Let us suppose that the cubic $X$ contains a $k$-rational line $L_{0}$
such that for every line $L'$ (defined over $\bar{k}$) in $X$ meeting
$L_{0}$, the plane containing $L$ and $L_{0}$ cuts out on $\bar{X}$
three distinct lines. Proposition (1.25) in \cite{MurreCompo1} ensures
that such a line $L_{0}$ exists on a finite extension of $k$. Since
the Tate conjecture for $S$ over $k$ holds if and only if it holds
over any finite extension of $k$ (see \cite[Theorem 2.9]{Tate2}),
this assumption on the existence of $L_{0}$ is not a restriction.

The Albanese variety $A$ of $S$ is defined over $k$ \cite[Lemma 3.1]{Achter}
and is $5$ dimensional. Let $\vartheta:S\to A$ be the Albanese map
such that $\vartheta(s_{0})=0$, where $s_{0}$ is the point of the
Fano surface corresponding to $L_{0}$. Let $\Theta$ be the (reduced)
image of $S\times S$ by the map $(s_{1},s_{2})\to\vartheta(s_{1})-\vartheta(s_{2})$.
The variety $\Theta$ is a divisor on $A$ defined over $k$ and $(A,\Theta)$
is a principally polarized Abelian variety (\cite[Proposition 5]{Beauville}
; we checked that although \cite{Beauville} deals with an algebraically
closed field, the assumption on the existence of $L_{0}$ ensures
that it remains true for the field $k$). The Albanese map $\vartheta:S\to A$
is an embedding and the class of $\bar{S}$ in $\bar{A}$ is $\frac{1}{3!}\Theta^{3}$
\cite[Corollaire of §4, and Proposition 7]{Beauville}. Moreover,
by \cite[p. 11]{Bombieri}, $b_{2}(\bar{S})=b_{2}(\bar{A})=45$. 

We thus see that the Fano surface $S$ of $X$ satisfies the hypothesis
of Proposition \ref{pro:Iso de galois modules} and therefore Theorem
\ref{thm:principal result The-Tate-conjecture} holds.

\bigskip

\noindent Xavier Roulleau,
Universit\'e de Poitiers,
Laboratoire de Math\'ematiques et Applications, UMR 7348 du CNRS,
 Boulevard Pierre et Marie Curie,
T\'el\'eport 2 - BP 30179,
86962 Futuroscope Chasseneuil,
France\\
{\tt Xavier.Roulleau@math.univ-poitiers.fr}\\ 

\end{document}